\documentclass[a4paper]{amsart}

\usepackage[english]{babel}
\usepackage[utf8]{inputenc}
\usepackage{amsmath}
\usepackage{amssymb}
\usepackage{amsthm}
\usepackage{MnSymbol}
\usepackage{tikz-cd}
\usepackage{graphicx}
\usepackage{mathrsfs}
\usepackage[colorinlistoftodos]{todonotes}

\title{Discriminants and Quasi-symmetry}
\author{Alex Kite \vspace{-10ex}}

\theoremstyle{plain}
\newtheorem{thm}{Theorem}[section]
\newtheorem{lem}[thm]{Lemma}

\newtheorem*{thm*}{Theorem}
\newtheorem*{lem*}{Lemma}
\newtheorem*{conj}{Conjecture}
\newtheorem*{cor*}{Corollary}
\newtheorem*{prop*}{Proposition}
\newtheorem{prop}[thm]{Proposition}

\theoremstyle{definition}
\newtheorem{df}[thm]{Definition}
\newtheorem*{df*}{Definition}

\newtheorem{df/thm}[thm]{Definition/Theorem}

\theoremstyle{remark}
\newtheorem{rem}[thm]{Remark}
\newtheorem*{ack}{Acknowledgements}

\DeclareMathOperator{\im}{Im}

\begin{document}
\maketitle

\begin{abstract}
This paper gives a geometric interpretation of the notion of quasi-symmetric representation and uses this to show that the discriminant locus associated to such a representation is a hyperplane arrangement. Moreover, we identify this hyperplane arrangement, up to a shift, with the one appearing in recent work of Halpern-Leistner--Sam 
\end{abstract}

\section{Introduction}

Start with a representation $T_L \lcirclearrowright \mathbb{C}^n$ of a connected torus $T_L$ over $\mathbb{C}$. The weights of this representation $\beta_1,\ldots,\beta_n$ lie in $L^\vee$, where $L$ is the lattice of cocharacters of $T_L$. A torus representation can be viewed geometrically through the associated \textit{toric variation of GIT} (VGIT). From this perspective, a choice of $\beta \in L^{\vee}$ determines an equivariant polarisation $\mathcal{O}(\beta)$ on $\mathbb{C}^n$ and we can form the associated  GIT quotient $X_{\beta}:=\mathbb{C}^n//_{\beta} T_L$, which is a toric variety. Our representation is \emph{Calabi--Yau} if the sum of the weights is 0 and if this holds, then $X_{\beta}$ is a \emph{Calabi--Yau variety} -- that is, $K_{X_{\beta}}\cong \mathcal{O}_{X_{\beta}}$. The space of parameters for the VGIT is therefore $L^{\vee}_{\mathbb{R}}$ and this comes with a wall and chamber decomposition \cite{DH}. Any two $\beta$ lying in the interior of a chamber give isomorphic varieties $X_{\beta}$ and crossing a wall corresponds to a birational modification of $X_{\beta}$. We call $X_{\beta}$ a \textit{phase} of the VGIT. 

In general, birational modifications of Calabi--Yaus are expected to induce equivalences between their derived categories and this has been proved for toric VGITs using the theory of windows in \cite{BFK} and \cite{HL}. However such equivalences are not unique and so we would like to understand the global story of how they all fit together. A priori it seems difficult to guess what relations one expects between these equivalences.

Yet physicists came up with a remarkable prediction, which we shall now explain. They first tell us to complexify the wall and chamber structure on the space of parameters. We do this by associating to our VGIT a particular type of 2-dimensional quantum field theory, called a \textit{gauged linear sigma model} (GLSM). Our VGIT parameters $\beta \in L^{\vee}_{\mathbb{R}}$ give parameters in this theory. These should be thought of as K\"ahler parameters -- indeed the polarisation $\mathcal{O}(\beta)$ descends to a canonical one on $X_{\beta}$. There are other ``K\"ahler-type" parameters in this theory, analogous to $B$-fields on a Calabi--Yau variety. Together these parameters form a complex manifold called the \emph{Fayet-Iliopoulos parameter space} (FIPS), which is a version of the ``stringy K\"ahler moduli space" of a Calabi--Yau variety. In general, it is hard to make global mathematical sense of the FIPS, though it is closely related to the space of Bridgeland stability conditions \cite{B}. However in our situation, toric mirror symmetry allows us to identify the FIPS with the ``complex parameter space" of the mirror GLSM. Following  Gelfand, Kapranov and Zelevinsky \cite{GKZ}, this is the complement of some ``GKZ discriminant locus" in the dual torus $T_{L^{\vee}}=\text{Hom}(L,\mathbb{C}^*)$, and it is this discriminant locus which acts as the complexification of our walls.

So we have a family of GLSMs parameterised by the FIPS. In these physical theories there are dynamical objects known as ``D-branes" which form a category. This category can be identified with the derived category of the associated phase when we choose our FI parameters to have $\beta$ lying sufficiently deep inside a chamber. Starting with a ``D-brane" in a particular phase, physicists claim they can canonically transport it to different phases as we vary the FI parameters. However mathematicians have yet to make ``D-brane transport", or indeed the GLSM itself, rigorous. Nonetheless the existence of this ``local system of categories" over the FIPS is a testable mathematical statement. Fixing a base point at a phase leads to the following conjecture:

\begin{conj}
There is an action of $\pi_1(\text{FIPS})$ on $D^b(X_{\beta})$ for any phase $X_{\beta}$.
\end{conj}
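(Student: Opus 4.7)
The plan is to make the FIPS explicit enough to compute its fundamental group, assign a candidate autoequivalence to each generator, and then verify the defining relations. By toric mirror symmetry the FIPS is the complement in $T_{L^{\vee}}$ of the GKZ discriminant $\nabla$, and for a quasi-symmetric representation the main theorem announced in the abstract identifies $\nabla$, up to a translation by a character, with a finite arrangement of toric hyperplanes whose chamber structure matches the VGIT wall structure on $L^{\vee}_{\mathbb{R}}$. I may fix a single base phase, since any two phases will be related by window-equivalences through which a $\pi_1$-action on one transports to an action on any of the others.

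From this model I compute the fundamental group using a Salvetti-type presentation for the complement of a toric hyperplane arrangement, fitting into
\begin{equation*}
1 \to K \to \pi_1(\text{FIPS}) \to L \to 1,
\end{equation*}
with $K$ generated by meridians around the components of $\nabla$. To each $\lambda \in L$ I assign the autoequivalence of tensoring with the descended line bundle $\mathcal{O}_{X_{\beta}}(\lambda)$, and to each meridian around a discriminant component the window-shift autoequivalence of $D^b(X_{\beta})$ obtained by composing the inverse of one Halpern-Leistner grade-restriction window with the other across the corresponding VGIT wall; equivalently, the spherical twist around the Koszul complex supported on the unstable stratum, in the sense of Ballard--Favero--Katzarkov.

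The principal obstacle is the relations. Commutation among line-bundle twists is automatic, and the mixed relations follow from tracking how the grade-restriction windows translate under twisting by $\mathcal{O}(\lambda)$. The braid-type relations among meridians are the delicate step; my plan is to reduce them to local analyses on transversal slices through each codimension-two stratum of $\nabla$, where the arrangement degenerates to a small number of hyperplanes meeting in the slice and the local monodromy should assemble into a braid action coming from mutations of the magic-window exceptional collections made available by the quasi-symmetric hypothesis. The real bottleneck is matching these local monodromies with the predicted braid generators in a way that manifestly realises the Salvetti presentation: the codimension-one picture via grade-restriction windows is well understood, but a clean description of the higher-codimension local models, together with a proof that the resulting local braid actions are compatible under transport of the base point, is exactly what keeps this conjecture open in general.
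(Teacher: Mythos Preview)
The statement you are attempting is labelled a \emph{conjecture} in the paper; there is no proof of it in the paper to compare against. What the paper does is verify the conjecture in the quasi-symmetric case by an indirect route: Theorem~\ref{Cor} shows that for a quasi-symmetric representation the log-discriminant locus is an affine hyperplane arrangement, and the Proposition in \S\ref{sec:HLS} shows that this arrangement agrees, up to a translation in $iL^{\vee}_{\mathbb{R}}$, with the arrangement $\mathcal{H}_{HLS}$ of Halpern-Leistner--Sam. Since a translation does not change the homotopy type of the complement, $\pi_1(\text{FIPS})$ coincides with the fundamental group of the HLS complement, and the action on $D^b(X_\beta)$ is then supplied wholesale by the main result of \cite{HLS}. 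The paper's own contribution is purely the identification of the two spaces; the construction of autoequivalences and the verification of relations are outsourced to \cite{HLS} and \cite{SV}.

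Your proposal instead tries to build the action from scratch: assign line-bundle twists to $L^{\vee}$, window-shift functors to meridians, and check relations in a Salvetti-type presentation. This is essentially the programme that \cite{HLS} already carries out (using \v{S}penko--Van den Bergh's magic windows rather than the local codimension-two braid analyses you sketch), so for the purposes of this paper there is no need to redo it: once the FIPS is identified with the HLS complement, the action is already in the literature. Your plan is not wrong in spirit, but it attempts to reprove a cited theorem rather than what the paper actually does, and---as you yourself concede in your final sentence---the braid-relation step is left incomplete, so as written it is a strategy outline rather than a proof.
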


Several people have constructed actions along these lines. Of particular relevance to us are Donovan--Segal's \cite{DS} examples arising from $A_n$ surface singularities. In \cite{DS}, the authors explicitly identify the FIPS with a hyperplane complement and show that its fundamental group acts on the derived category.  Recently Halpern-Leistner--Sam~\cite{HLS} have considered VGITs arising from so-called quasi-symmetric representations (generalising \cite{DS}), using work of \v{S}penko--Van den Bergh~\cite{SV} on non-commutative crepant resolutions. In this setting, they construct an action of the fundamental group of a certain hyperplane complement on the phases of this VGIT. However they leave open the question of whether this hyperplane complement is indeed the FIPS. In this note, we answer this question affirmatively.

\begin{prop*}
The hyperplane arrangement constructed by Halpern-Leistner--Sam in \cite{HLS} agrees with the GKZ discriminant locus up to an overall shift. 
\end{prop*}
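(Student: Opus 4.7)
The plan is to compute the GKZ discriminant locus explicitly via Gelfand--Kapranov--Zelevinsky's factorisation of the principal $A$-determinant as a product of face discriminants, and to verify that for a quasi-symmetric weight configuration $\mathcal{A}=\{\beta_1,\ldots,\beta_n\}$ each non-trivial face contributes exactly one hyperplane, matching the HLS arrangement. Concretely, I would begin by recalling that the principal $A$-determinant $E_{\mathcal{A}}$ factors as $\prod_{\Gamma\leq Q} \Delta_{\Gamma\cap\mathcal{A}}^{m_\Gamma}$, indexed by faces $\Gamma$ of the Newton polytope $Q=\mathrm{Conv}(\beta_1,\ldots,\beta_n)$, where $\Delta_{\Gamma\cap\mathcal{A}}$ is the reduced $A$-discriminant of the restricted configuration. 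The Calabi--Yau condition ensures that $\mathcal{A}$ lies in an affine hyperplane, so this framework applies cleanly.

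Next, I would feed in the geometric interpretation of quasi-symmetry established earlier in the paper. Quasi-symmetry severely constrains the combinatorics of the weight polytope: the only faces $\Gamma$ whose restricted configurations admit non-trivial circuits are those associated to the one-parameter subgroups $\lambda\colon \mathbb{G}_m\to T_L$ whose fixed loci on $\mathbb{C}^n$ have positive dimension, and for each such $\lambda$ the circuit is essentially unique up to scaling. This forces $\Delta_{\Gamma\cap\mathcal{A}}$ to be (affinely equivalent to) the \emph{reduced} discriminant of a single circuit, which is well known to be a hyperplane in $T_{L^\vee}$ of the form $\{x^{\gamma_+}=c_\gamma\, x^{\gamma_-}\}$ for some explicit constant $c_\gamma$ computable from $\gamma$ by Horn's formula. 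Thus the vanishing locus of $E_{\mathcal{A}}$ is a union of such hyperplanes, indexed exactly by the one-parameter subgroups figuring in the HLS construction.

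To match this with the HLS arrangement, I would exponentiate the HLS hyperplanes from $L^\vee_{\mathbb{C}}$ into $T_{L^\vee}$ and observe that they are subtori of codimension one, cut out by the same characters $x^{\gamma_+-\gamma_-}$ but passing through the identity rather than through a translate by $c_\gamma$. The quasi-symmetry condition forces the constants $c_\gamma$ to all be expressible as $\exp\langle -,s\rangle$ evaluated on the same element $s\in L^\vee_{\mathbb{C}}$, independent of the circuit $\gamma$: this is the claimed ``overall shift''. Uniformity of $s$ follows because quasi-symmetry makes the local logarithmic residues along distinct hyperplanes compatible, essentially because the ``fudge factor'' from Horn's formula collapses under the balanced condition $\sum\gamma_i\beta_i=0$ with equal positive and negative contributions.

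The main obstacle is the intermediate step of showing that every face contribution degenerates to a single hyperplane rather than a higher degree hypersurface; this requires carefully analysing which subconfigurations $\Gamma\cap\mathcal{A}$ arise from faces of $Q$ under quasi-symmetry, and verifying that the restricted configuration is always the Cayley configuration of a single circuit so that the generic $A$-discriminant is linear. A secondary subtlety is checking that the shift $s$ is genuinely common to all hyperplanes and not a separate translation per component, which I expect to follow from a direct computation of the Horn constants using only the sum-zero property along each destabilising $\lambda$.
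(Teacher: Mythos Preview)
Your overall strategy---factor $E_A$ over faces, show that under quasi-symmetry only circuit faces contribute and those give hyperplanes via Horn, then match with HLS---is exactly the paper's approach. But two concrete points would derail the argument as written.

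First, the faces $\Gamma$ in the GKZ factorisation are faces of the polytope $\Delta=\sigma\cap H$ in $N_{\mathbb{R}}$ (the \emph{ray} side), not of $\mathrm{Conv}(\beta_1,\ldots,\beta_n)\subset L^\vee_{\mathbb{R}}$. The mechanism by which non-circuit faces drop out is not that ``only certain faces admit circuits'': every face $\Gamma$ with $\operatorname{rk} L_\Gamma\geq 2$ inherits quasi-symmetry (this needs a separate lemma), hence by the Horn argument its primary discriminant $\nabla_{A\cap\Gamma}$ is a \emph{point}, so $\Delta_{A\cap\Gamma}=1$ and it contributes nothing to $E_A$. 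Your phrasing ``every face contribution degenerates to a single hyperplane'' is not quite right; most contribute nothing at all.

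Second, and more seriously, the HLS hyperplanes do \emph{not} pass through the identity of $T_{L^\vee}$. The HLS arrangement is the $L^\vee$-orbit of the supporting hyperplanes of the facets of the polytope $\bar\nabla=\tfrac12\bar\Sigma$, and a facet hyperplane is $\{\langle l_F,y\rangle=c_F\}$ with $c_F=\langle l_F,\sum_{\langle l_F,\beta_j\rangle>0}\beta_j/2\rangle$ a half-integer that is genuinely nonzero in general. So the discrepancy with the GKZ hyperplane $\{x^{l_\Gamma}=c_\Gamma\}$, where $c_\Gamma=\prod_j n_j^{\langle l_\Gamma,\beta_j\rangle}$, is not simply a uniform multiplicative shift. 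The paper splits the comparison into real and imaginary parts: the \emph{real} part of $\mathrm{Log}_{br}(c_\Gamma)$ is $0$ or $\tfrac12$ according to the sign of $c_\Gamma$, and one checks via a parity argument that $c_\Gamma>0$ iff $c_F\in\mathbb{Z}$, so the real parts already agree modulo $\mathbb{Z}$ with no shift needed. Only the \emph{imaginary} parts differ, and there quasi-symmetry gives $\log|c_\Gamma|=\langle l_\Gamma,\sum_j\log|n_j|\,\beta_j\rangle$, which is indeed a pairing with a single element of $L^\vee_{\mathbb{R}}$ independent of $\Gamma$. Your proposal collapses these two steps into one and would produce the wrong shift.

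You also need to establish the bijection between circuit faces $\Gamma\subset\Delta$ and facets $F$ of $\bar\nabla$ (equivalently, that $H_{l_\Gamma}=H_{l_F}$ ranges over the same set of linear hyperplanes); this is not automatic and in the paper uses the characterisation of faces of the cone $\sigma$ via positive relations among weights.
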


\noindent As this shift doesn't affect the topology of the FIPS, we see that the physicists' conjecture is true for all quasi-symmetric toric VGITs.

Actions of hyperplane complements have also appeared in the context of geometric representation theory. For example, Bezrukavnikov--Riche \cite{BR} study examples coming from the Grothendieck-Springer resolution of a semi-simple Lie algebra $\mathfrak{g}$ (this covers the examples in \cite{DS}) and construct actions of the fundamental group of the complement in $\mathfrak{h}^{\vee}$ of the reflection hyperplanes of the Weyl group of $\mathfrak{g}$.
Donovan--Wemyss \cite{DW} also construct actions of fundamental groups of hyperplane complements but it is not yet clear to us how their hyperplane complement relates to the FIPS.

The examples of Donovan--Segal and Halpern-Leistner--Sam all start with a special type of torus representation, called a \emph{quasi-symmetric} representation. 
\begin{df*}{(\cite{SV}, 1.6)} A representation $T_L \lcirclearrowright \mathbb{C}^n$ is \textit{quasi-symmetric} if and only if, for each line $\ell$ in $L^\vee_{\mathbb{R}}$, the sum of the weights $\beta_i$ that lie on $\ell$ is zero.
\end{df*}

\noindent An important example  of quasi-symmetric representations from the geometric representation theory perspective  are \textit{self-dual} representations -- that is, $\mathbb{C}^n \cong (\mathbb{C}^n)^{\vee}$ as representations. We also note that quasi-symmetric representations are necessarily \textit{Calabi--Yau} and these conditions are equivalent when $L$ has rank 1. 

Our main result answers the natural question: why does the quasi-symmetry condition simplify the construction of these categorical actions so much? 

\begin{thm*}
The discriminant locus of a quasi-symmetric representation is a hyperplane arrangement.
\end{thm*}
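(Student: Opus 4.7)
The plan is to use the Horn--Kapranov parametrization of the GKZ discriminant, exploiting the per-line factorization of the mirror superpotential forced by quasi-symmetry. Recall that the lattice of linear relations among the weights fits into a short exact sequence
\[
0 \longrightarrow K \longrightarrow \mathbb{Z}^n \xrightarrow{(\beta_1,\dots,\beta_n)} L^\vee \longrightarrow 0,
\]
and the discriminant in $T_{L^\vee}$ is the closure of the image of the rational Horn map $\Phi\colon \mathbb{P}(K_{\mathbb{C}}) \to T_{L^\vee}$, $[\mu] \mapsto \prod_i \mu_i^{\beta_i}$. Equivalently, one can work on the mirror Landau--Ginzburg side: the discriminant is the locus in FI parameters over which the superpotential $W_c(y) = \sum_i c_i\, y^{\beta_i}$ on $T_L$ acquires a degenerate critical point.

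To exploit quasi-symmetry, group the weights by the line $\ell \subset L^\vee_\mathbb{R}$ on which they lie, writing $\beta_i = m_i \gamma_\ell$ for $i \in I_\ell$ with $\gamma_\ell$ primitive in $L^\vee$. The hypothesis is precisely $\sum_{i \in I_\ell} m_i = 0$ for each $\ell$. Setting $z_\ell := y^{\gamma_\ell}$, the superpotential decomposes as a sum of one-variable balanced Laurent polynomials,
\[
W_c(y) = \sum_\ell W_{c,\ell}(z_\ell), \qquad W_{c,\ell}(z) = \sum_{i \in I_\ell} c_i\, z^{m_i},
\]
and correspondingly $\Phi$ factors through an intermediate torus $T_\mathcal{L}$ of dimension equal to the number of lines, whose final map $T_\mathcal{L} \to T_{L^\vee}$ is dual to $e_\ell \mapsto \gamma_\ell$. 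Since the Hessian of $W_c$ in the $z_\ell$-coordinates is block-diagonal, degeneracy of $W_c$ reduces to degeneracy of one of the pieces $W_{c,\ell}$. A direct resultant computation on each balanced univariate $W_{c,\ell}$ shows its discriminant is cut out by a single polynomial equation in $(c_i)_{i \in I_\ell}$; pushing down through $T_\mathcal{L} \to T_{L^\vee}$, each such equation becomes a translated codim-one subtorus, and their union is the desired hyperplane arrangement (in the universal cover $L^\vee_{\mathbb{C}}$, a periodic affine arrangement).

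I expect the main obstacle to lie in the case when the primitive line-generators $\gamma_\ell$ are linearly dependent, i.e., when the number of lines exceeds $\mathrm{rk}\, L$: then $T_\mathcal{L} \to T_{L^\vee}$ has non-trivial kernel and the per-line decomposition is no longer an honest splitting, as relations among the $\gamma_\ell$ couple the critical-point equations. I would address this by a base change on $T_L$ making the relevant line-generators independent, then check that no spurious discriminant components arise from the coupling, and verify that the explicit constants obtained (products like $\prod_i m_i^{m_i}$ from each one-variable discriminant) identify, up to the expected overall shift, with those in the Halpern-Leistner--Sam arrangement.
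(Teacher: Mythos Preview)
Your proposal has a genuine gap at the very setup, which then propagates through the whole argument.

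First, the Horn map has the wrong domain. In your short exact sequence, $K=\ker\big((\beta_1,\ldots,\beta_n)\big)$ is the paper's lattice $M$, of rank $n-k$. The Horn--Kapranov parametrization of $\nabla_A$ goes out of $\mathbb{P}(L_{\mathbb{C}})\cong\mathbb{P}^{k-1}$, where $L$ is the lattice of relations among the \emph{rays} $\omega_i\in N$, not among the weights. Relatedly, the GKZ superpotential whose degenerate critical locus defines $\nabla_A$ is $f(x)=\sum_i a_i\,x^{\omega_i}$ on $T_M$, with exponents the rays; it is not $\sum_i c_i\,y^{\beta_i}$ on $T_L$. Quasi-symmetry is a condition on the weights $\beta_i$, so your per-line splitting $W=\sum_\ell W_\ell(z_\ell)$ with $z_\ell=y^{\gamma_\ell}$ simply does not apply to the object whose discriminant you need to compute: the rays $\omega_i$ have no corresponding ``line'' structure, and the Hessian of the correct $f$ is not block-diagonal in any obvious way. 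The honest obstacle you flag (dependent $\gamma_\ell$'s) is thus moot---the decomposition was of the wrong function to begin with.

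Second, even if one could salvage a computation of $\nabla_A$, it would not give the theorem. The discriminant locus here is $\{E_A=0\}$, the principal $A$-determinant, which is a product of $\Delta_{A\cap\Gamma}$ over all faces $\Gamma\subset\Delta$. The paper's argument runs as follows: via the Horn uniformization one shows that quasi-symmetry is \emph{equivalent} to the Horn map being constant, so $\nabla_A=\nabla_{pr}$ is a single point and contributes nothing to the hyperplane picture. One then checks that quasi-symmetry is inherited by the sub-VGIT attached to each face $\Gamma$; hence whenever $\mathrm{rank}\,L_\Gamma>1$ the face discriminant $\nabla_{A\cap\Gamma}$ is again a point and $\Delta_{A\cap\Gamma}=1$. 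The only surviving contributions come from faces $\Gamma$ which are circuits ($\mathrm{rank}\,L_\Gamma=1$), and for those the discriminant is visibly a single log-hyperplane $\{x^{l_\Gamma}=c_\Gamma\}$ with $c_\Gamma=\prod_j n_j^{n_j}$. So the hyperplanes arise from the face structure of $E_A$, not from a decomposition of the primary discriminant; your outline never touches this face recursion, and without it you cannot account for where the hyperplanes actually come from.
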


Conversely, we expect that it is hard to write down interesting examples of Calabi--Yau representations whose discriminant locus is a hyperplane arrangement but which are not quasi-symmetric. For example, this cannot happen with a FIPS of dimension 1 or 2. 

Our theorem follows in a straightforward fashion from the following lemma. As explained in \S \ref{sec:discr}, the discriminant locus naturally breaks up into various ``discriminant varieties" of different dimensions. In this collection, there is a ``primary" one, denoted $\nabla_{pr}$.  

\begin{lem*}
A torus representation $T_L \lcirclearrowright \mathbb{C}^n$ is quasi-symmetric if and only if it is Calabi--Yau and $\nabla_{pr}$ is a point.
\end{lem*}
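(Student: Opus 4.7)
My approach is to use an explicit description of $\nabla_{pr}$ developed in Section~\ref{sec:discr}, most naturally a Horn--Kapranov-style rational parametrisation realising $\nabla_{pr}$ as the image in $T_{L^\vee}$ of a map depending on the weights $\beta_i$ and the lattice of linear relations $\mathbb{L} = \ker(A \colon \mathbb{Z}^n \to L^\vee)$. Under this description, the statement ``$\nabla_{pr}$ is a point'' becomes ``the parametrising map is constant''. The Calabi--Yau hypothesis $\sum_i \beta_i = 0$ should be precisely what makes this parametrisation descend to a well-defined map whose image lies in $T_{L^\vee}$.

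For the forward direction (quasi-symmetric implies $\nabla_{pr}$ is a point), I would partition the weight indices by the line on which they lie: $\{1, \ldots, n\} = \bigsqcup_\ell I_\ell$. For each line $\ell$, fix a primitive $\alpha_\ell \in L^\vee$ and write $\beta_i = m_i \alpha_\ell$ for $i \in I_\ell$; quasi-symmetry yields $\sum_{i \in I_\ell} m_i = 0$. Regrouping the Horn formula by line, each line-block reduces to an $\alpha_\ell$-th power of a scalar monomial with zero total exponent, i.e., to the rank-$1$ Calabi--Yau case, for which $\nabla_{pr}$ is trivially a single point. Combining the line contributions multiplicatively shows that the global Horn map is constant.

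For the reverse direction (Calabi--Yau and $\nabla_{pr}$ a point implies quasi-symmetric), the strategy is to extract per-line information from the global constancy. I would take logarithmic derivatives of the Horn map: constancy is equivalent to the vanishing of a logarithmic $1$-form on $\mathbb{L}_{\mathbb{C}}$, and the per-line conditions appear as residues along the codimension-one strata $\{\lambda_i = 0\}$. The residue at such a stratum isolates the contribution from the line containing the weight $\beta_i$, and its vanishing forces those weights to sum to zero. Iterating over all indices yields quasi-symmetry.

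The main obstacle is the reverse direction. When the lines in $L^\vee_{\mathbb{R}}$ are linearly dependent, the representation does not factor as a direct sum of rank-$1$ subrepresentations, and the contributions from different lines become entangled in the Horn formula. The residue/specialisation approach is designed to disentangle them, but carefully tracking that each residue recovers the data of a single line, without mixing in contributions from coincident lines or from the ambient torus structure, is where the real work lies.
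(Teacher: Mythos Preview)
Your forward direction matches the paper's approach exactly: group the Horn uniformisation by lines, and observe that the contribution of each line collapses to a constant under the quasi-symmetry hypothesis.

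For the reverse direction, however, the paper's argument is considerably simpler than your proposed residue calculation, and the obstacle you anticipate does not arise. After writing each $\beta_j$ on a line $\ell$ as $n_j\underline{\ell}$ with $\underline{\ell}=(l_1,\ldots,l_k)$ primitive, the $i$-th component of the Horn map becomes
\[
\text{const}\cdot\prod_{\ell}\Bigl(\sum_{m} \lambda_m l_m\Bigr)^{\sum_{j\mid\beta_j\in\ell}\beta_{ji}}.
\]
The linear forms $\sum_m\lambda_m l_m$ attached to \emph{distinct} lines $\ell$ are pairwise non-proportional and hence coprime in $\mathbb{C}[\lambda_1,\ldots,\lambda_k]$. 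By unique factorisation, this rational function of degree~$0$ is constant if and only if every exponent $\sum_{j\mid\beta_j\in\ell}\beta_{ji}$ vanishes, which is exactly quasi-symmetry. So both directions fall out of the same one-line observation, and there is nothing to disentangle: the per-line data are already separated as irreducible factors. Your residue computation would recover the same exponents (the residue of $d\log$ of the $i$-th component along $\{\sum_m\lambda_m l_m=0\}$ is precisely $\sum_{j\mid\beta_j\in\ell}\beta_{ji}$), so your approach is correct, just heavier than necessary.

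One small notational slip: the strata supporting the poles and zeros are the hyperplanes $\{\langle\lambda,\beta_i\rangle=0\}$ in $L_{\mathbb{C}}$, which for general $i$ are not coordinate hyperplanes $\{\lambda_i=0\}$; your description of the residue ``at $\{\lambda_i=0\}$'' should be read with this in mind.
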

 
We hope that our results will motivate interest in the non-quasi-symmetric case where the geometry of the discriminant is richer and the combinatorics of toric VGITs is harder.

In \S\ref{sec:discr}, we recall the theory of GKZ discriminants and determinants in the toric setting from \cite{GKZ}. Proofs of the Theorem and Lemma above are the content of \S\ref{sec:main}. In \S\ref{sec:HLS}, we introduce the hyperplane arrangement in \cite{HLS} and prove it agrees with the GKZ discriminant locus.

\begin{ack}
I am indebted to Ed Segal for raising this question and to both him and Tom Coates for numerous useful discussions.
\end{ack}

\noindent \emph{Notation:}
Let $e_i$ denote the $i$th standard basis vector for $\mathbb{Z}^n$ and $e_i^\vee$ denote the $i$th standard basis vector for $(\mathbb{Z}^n)^\vee$.  The weights give a map $Q: (\mathbb{Z}^n)^{\vee} \rightarrow L^{\vee}$ defined by $Q(e_i^{\vee}):=\beta_i$.  Without loss of generality we may assume that $Q$ is surjective, as replacing $L^\vee$ by $\im(Q)$ leaves both the discriminant and the notion of quasi-symmetry unchanged. Set $k:=\text{rank}(L)$.  Let $M:=\text{ker}(Q)$ and $N:=M^{\vee}$; these are both lattices of rank $n-k$. There are exact sequences:
\begin{align}
0 & \rightarrow L  \xrightarrow{Q^{\vee}} \mathbb{Z}^n  \xrightarrow{A} N   \rightarrow 0 \nonumber \\
0 & \rightarrow M  \xrightarrow{A^{\vee}} (\mathbb{Z}^n)^{\vee}  \xrightarrow{Q} L^{\vee}  \rightarrow 0 
\label{ToricData}
\end{align} 
The map $A$ here is called the \emph{ray map} and $\omega_i:=A(e_i)$ is the $i$th \textit{ray}. We assume that the rays are distinct. From the perspective of toric VGIT, we usually think of $A$ as the starting data; cf.~\cite{DH}. Let $\sigma \subset N_{\mathbb{R}}$ be the cone generated by the rays. For $l \in L$, set $H_l:=\{ y \in L_{\mathbb{R}}^{\vee} \mid \langle l, y \rangle = 0\}$.

\section{Discriminants in the toric setting}
\label{sec:discr}

We now introduce the principal A-determinant and Horn uniformization following \cite{GKZ}, Ch. 9 and 10. Abusing notation, let $A \subset N$ be the set of rays. Henceforth we consider only torus representations that are Calabi--Yau. This allows us to find an affine hyperplane $H$, of the form $\langle m,- \rangle =1$ for some $m \in M$, on which all the rays lie. So we can equivalently think of $A$ as a subset of the polytope $\Delta:=\sigma \cap H$. 

Identifying elements $\omega \in N=M^{\vee}$ with characters $x^{\omega}$ of $T_M$, we may consider $\mathbb{C}^{A}:=\{f(x)=\sum_{\omega \in A} a_{\omega} x^{\omega} \}$, the set of functions on $T_M$ with exponents in $A$. 
\begin{df}{(\cite{GKZ}, Ch. 9, 1.2)} Set
\[
\nabla_0:=\{ f(x)=\{a_{\omega}\}_{\omega \in A} \in \mathbb{C}^A \mid \text{$f$ has a critical point in $T_M$} \}
\]
and $\nabla_A:=\bar{\nabla}_0 \subset \mathbb{C}^A$. When $\nabla_A$ is a hypersurface, its defining equation is called the \textit{A-discriminant} $\Delta_A(\{a_{\omega}\}_{\omega \in A})$. Otherwise we declare $\Delta_A=1$.
\label{A-disc}
\end{df}

Gelfand, Kapranov, and Zelevinsky give an intrinsic definition, along the lines of Definition~\ref{A-disc}, of the  \textit{principal A-determinant} $E_A \in \mathbb{C}[\{a_\omega\}]$, and, in \cite{GKZ}, Ch. 10, Thm 1.2, show that (up to a sign) 
\begin{equation}
  \label{eq:E_A}
  E_A(\{a_{\omega}\}_{\omega \in A}):= \prod_{\Gamma \subset \Delta} \Delta_{A \cap \Gamma}^{m(\Gamma)}.
\end{equation}
Here the multiplicity $m_\Gamma$ (defined in \cite{GKZ}, Ch. 10, 1.B) is a non-negative integer, and we interpret $\Delta_{A \cap \Gamma}$ as a function on $\mathbb{C}^A$ by pulling-back under the natural projection $\mathbb{C}^A \twoheadrightarrow \mathbb{C}^{A \cap \Gamma}$.
A~priori, $\nabla_A$ and $\{E_A=0\}$ are defined inside $(\mathbb{Z}^n)^{\vee} \otimes \mathbb{C}$ but, in fact, they have $k$ quasi-homogeneities (\cite{GKZ}, Ch. 9, 3.B) meaning that they descend to $T_{L^{\vee}}$. When we write $\nabla_A$ or $\{E_A=0\}$, we shall mean this version of the A-discriminant/A-determinant. 

\begin{df}
The \textit{discriminant locus} is the subset $\{E_A=0\} \subset T_{L^{\vee}}$. The \textit{FIPS} associated to $A$ is the complement of the discriminant locus inside $T_{L^{\vee}}$.
\label{FIPS} 
\end{df}

\begin{rem}
One might expect the (reduced) discriminant locus  to be $\bigcup_{\Gamma \subset \Delta} \nabla_{A \cap \Gamma}$. It is not clear to us whether this agrees with our definition.
\end{rem}

\begin{rem}
The universal cover $\pi: L^{\vee} \otimes \mathbb{C} \rightarrow T_{L^{\vee}}$ induces a covering $\widetilde{\text{FIPS}} \rightarrow$ FIPS by restricting along the inclusion FIPS $\subset T_{L^{\vee}}$. Since $\pi$ is given in coordinates by taking the logarithm, we prefix the pullback under this cover of any object defined on $T_{L^{\vee}}$ by \emph{log}. For example, the \emph{log}-discriminant locus is the union of $\{\text{Log}(\Delta_{A \cap \Gamma})=0\}$ over all faces $\Gamma \subset \Delta$, where $\text{Log}:=\frac{\text{log}}{2\pi i}$. As $\text{Log}$ is multivalued, $\{\text{Log}(\Delta_{A \cap \Gamma})=0\}$ consists of translates under $L^{\vee}$ of $\{\text{Log}_{br}(\Delta_{A \cap \Gamma})=0\}$, where $\text{Log}_{br}$ is the single-valued version of $\text{Log}$ with arguments lying in $[0,2\pi)$. 
\end{rem}

\begin{rem}
Recall (see e.g \cite{CLS} Ch. 14 or \cite{DH}) that  the weights $\beta_i$ determine a fan in $L^{\vee}_{\mathbb{R}}$, called the ``secondary fan". The maximal cones of this fan parametrise all the different quasi-projective simplicial fans with support $\sigma$ and whose rays form a subset of the original rays in $N$. It is helpful to think of the discriminant locus as a ``detropicalisation" of the codimension-1 cones in the secondary fan -- that is, the latter are the asymptotic directions of the discriminant locus. The precise statement is that the secondary fan is the normal fan of the Newton polytope of $E_A$ (\cite{GKZ}, Ch. 10, Thm 1.4).
\end{rem}

\begin{df}
The \textit{primary discriminant} of the VGIT is $\nabla_{pr}:=\nabla_A \subset T_{L^{\vee}}$. When it is a hypersurface, we call it the \textit{primary component}. 
\label{Pr}
\end{df}

\begin{df/thm}{(\cite{GKZ}, Ch. 9, 3.C)}
The \textit{Horn uniformization} is the rational map with image $\nabla_{pr}$ given by:
\begin{align*}
\mathbb{P}(L \otimes \mathbb{C}) & \dashedrightarrow \nabla_{pr} \subset T_{L^{\vee}}=\text{Hom}(L,\mathbb{C}^*) \\
{[a_1,\hdots ,a_n]} & \mapsto ((b_1, \hdots , b_n) \mapsto \prod_{i=1}^n a_i^{b_i} )
\end{align*} 
where we identify $L$ with its image inside $\mathbb{Z}^n$. In the case when $\nabla_{pr}$ is a hypersurface, this is a \textit{birational} map.
\end{df/thm}

If we pick a basis for $L$ and corresponding coordinates $\lambda_1, \hdots, \lambda_k$ on $L \otimes \mathbb{C}$, then (identifying $T_{L^{\vee}} \cong (\mathbb{C}^*)^k$) we may rewrite the Horn uniformization as:

\begin{align}
\mathbb{P}^{k-1} & \dashedrightarrow \nabla_{pr} \subset (\mathbb{C}^*)^k \nonumber \\ 
{[\lambda_1, \hdots, \lambda_k]} & \mapsto ( \prod_{j=1}^n ( \lambda_1 \beta_{j1}+ \hdots + \lambda_k \beta_{jk} )^{\beta_{ji}}  )_{i=1, \hdots, k} \label{Horn}
\end{align}
where $\beta_{ji}$ are the components of $\beta_j \in L^{\vee}\cong \mathbb{Z}^k$ in the dual basis.

\section{Quasi-symmetry and discriminants}
\label{sec:main}

The proof of Theorem \ref{Cor} is based on the following observation:
\begin{lem}
The representation $T_L \lcirclearrowright \mathbb{C}^n$ is quasi-symmetric if and only if $\nabla_{pr}$ is a point.
\label{Main}
\end{lem}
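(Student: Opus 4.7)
The plan is to exploit the explicit Horn uniformization (\ref{Horn}), since $\nabla_{pr}$ is by definition the closure of its image; thus $\nabla_{pr}$ is a point precisely when each of the $k$ coordinate rational functions
\begin{equation*}
F_i(\lambda) := \prod_{j=1}^n \langle \lambda, \beta_j\rangle^{\beta_{ji}} \in \mathbb{C}(\lambda_1,\ldots,\lambda_k)
\end{equation*}
is a nonzero constant. The computation reduces cleanly once we group the factors by the line $\ell \subset L^\vee_\mathbb{R}$ through the origin containing each $\beta_j$.

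For each such line, I would pick a primitive vector $v_\ell \in L^\vee$ along $\ell$ and write $\beta_j = c_j v_\ell$ with $c_j \in \mathbb{Z}\setminus\{0\}$ for the weights on $\ell$. Then $\langle \lambda, \beta_j\rangle = c_j \langle \lambda, v_\ell\rangle$ and $\beta_{ji} = c_j (v_\ell)_i$, so the contribution of the weights on $\ell$ to $F_i$ collapses to
\begin{equation*}
\Bigl(\prod_{j \text{ on } \ell} c_j^{c_j (v_\ell)_i}\Bigr) \cdot \langle \lambda, v_\ell\rangle^{(v_\ell)_i \sum_{j \text{ on } \ell} c_j},
\end{equation*}
i.e.\ a nonzero complex constant times a single power of the linear form $\langle \lambda, v_\ell\rangle$.

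The forward direction is then immediate: quasi-symmetry says $\sum_{j\text{ on }\ell} c_j = 0$ for every $\ell$, so all $\lambda$-dependence disappears from each $F_i$ and $\nabla_{pr}$ collapses to a point. For the converse, distinct lines give non-proportional linear forms $\langle \lambda, v_\ell\rangle$, which are therefore distinct irreducibles in the UFD $\mathbb{C}[\lambda_1,\ldots,\lambda_k]$; hence each $F_i$ being a nonzero constant forces every exponent $(v_\ell)_i \sum_j c_j$ to vanish, and running this over all coordinates $i$ (using $v_\ell \ne 0$) yields $\sum_{j \text{ on }\ell} c_j = 0$ for every $\ell$, which is quasi-symmetry. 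I do not anticipate any real obstacle: the argument is essentially a direct calculation, and the only mild subtlety is being careful about UFD factoring and a bit of sign bookkeeping when some $c_j$ are negative.
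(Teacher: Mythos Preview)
Your proposal is correct and follows essentially the same approach as the paper's proof: both reduce to constancy of the Horn uniformization coordinates $F_i$, decompose the product over lines $\ell \subset L^\vee_{\mathbb{R}}$, write each weight on $\ell$ as an integer multiple of a primitive generator, and observe that the contribution of each line is a nonzero constant times a single power of the linear form $\langle \lambda, v_\ell\rangle$ with exponent proportional to $\sum_{j\text{ on }\ell} c_j$. Your invocation of unique factorization to handle the converse is a touch more explicit than the paper's phrasing, but the argument is otherwise identical.
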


\begin{proof}
$\nabla_{pr}$ is a point if and only if its Horn uniformization is constant. From \eqref{Horn}, this happens precisely when, for all $i$, $\prod_{j=1}^n ( \lambda_1 \beta_{j1}+ \hdots + \lambda_k \beta_{jk} )^{\beta_{ji}} $
is constant as a degree 0 element of $\mathbb{C}(\lambda_1, \hdots, \lambda_k)$.
Since $\sum_{m=1}^k \lambda_m \beta_{jm}$ cancels with $\sum_{m=1}^k \lambda_m \beta_{Jm}$ if and only if $\beta_j$ and $\beta_{J}$ lie on the same line in $L^{\vee}_{\mathbb{R}}$, decomposing $\prod_{j=1}^n ( \lambda_1 \beta_{j1}+ \hdots + \lambda_k \beta_{jk} )^{\beta_{ji}}$ as $\prod_{\ell \subset L^{\vee}_{\mathbb{R}}} (\prod_{j| \beta_j \in \ell} (\sum_{m=1}^k \lambda_m \beta_{jm})^{\beta_{ji}}$ shows that this is constant if and only if each factor $\prod_{j| \beta_j \in \ell} (\sum_{m=1}^k \lambda_m \beta_{jm})^{\beta_{ji}}$ is constant for all $i$ and lines $\ell$. Fix a primitive generator $\underline{\ell}=(l_1, \hdots, l_k)$ for $\ell$ and write each $\beta_j$ on $\ell$ as $n_j \underline{\ell}$. Then
\begin{gather*}
\prod_{j| \beta_j \in \ell} (\sum_{m=1}^k \lambda_m \beta_{jm})^{\beta_{ji}}=(\prod_{j| \beta_j \in \ell} 
n_j^{\beta_{ji}})(\sum_{m=1}^k \lambda_m l_{m})^{\sum_{j| \beta_j \in \ell} \beta_{ji}}  
\end{gather*}
is constant if and only if $\sum_{j| \beta_j \in \ell} \beta_{ji}=0$. Hence the result.
\end{proof}

\begin{df}{(\cite{GKZ}, Ch. 7, 1.B)}
A collection of rays $\{\omega_i\} \subset N_{\mathbb{R}}$ forms a \textit{circuit} if there is precisely one linear relation between them. A face $\Gamma \subset \Delta$ is called a \textit{circuit} if the collection of all rays lying in $\Gamma$ forms a circuit.
\end{df}

If $\Gamma$ is a circuit, then the proof of Lemma \ref{Main} implies $\nabla_{A \cap \Gamma}$ is a hyperplane (it's a point), given by $\{x=c_{\Gamma}\} \subset T_{L_{\Gamma}^{\vee}}$ where $c_{\Gamma} = \prod_{j=1}^n n_j^{n_j}$ and $x$ is the coordinate on $T_{L^{\vee}}$ corresponding to a choice of generator  $l_{\Gamma} \in L_{\Gamma}$. When we pull $\nabla_{A \cap \Gamma}$  back to $T_{L^{\vee}}$ under the map induced by $p: L^{\vee} \twoheadrightarrow L_{\Gamma}^{\vee}$, it becomes $h_{\Gamma}:=\{x^{l_{\Gamma}}=c_{\Gamma}\}$, where $c_{\Gamma}$ only depends on $l_{\Gamma}$ since $n_j=\langle l_{\Gamma}, \beta_j \rangle$. In the quasi-symmetric case, if  $\underline{\ell}$ is a primitive generator of $\ell$ and $\beta_j=n_j \underline{\ell}$ we have that: 
\begin{equation}
c_{\Gamma}= \prod_{\ell \subset L_{\mathbb{R}}^{\vee}} \prod_{j \mid \beta_j \in \ell} (n_j \langle l_{\Gamma},\underline{\ell} \rangle)^{ \langle l_{\Gamma},\beta_j \rangle} = \prod_{\ell \subset L_{\mathbb{R}}^{\vee}} \prod_{j \mid \beta_j \in \ell} n_j^{\langle  l_{\Gamma}, \beta_j \rangle  }= \prod_j n_j^{\langle l_{\Gamma},\beta_j \rangle} 
\label{c}
\end{equation}
 where, for the second equality, we use that 
\[ \prod_{j \mid \beta_j \in \ell} (n_j \langle l_{\Gamma}, \underline{\ell} \rangle)^{ \langle l_{\Gamma},\beta_j \rangle} = (\prod_{j \mid \beta_j \in \ell} n_j^{\langle l_{\Gamma},\beta_j \rangle}) \langle l_{\Gamma},\underline{\ell} \rangle^{\sum_j \langle l_{\Gamma},\beta_j \rangle}=\prod_{j \mid \beta_j \in \ell} n_j^{\langle l_{\Gamma} , \beta_j \rangle} \]
and that $\sum_{j \mid \beta_j \in \ell} \beta_j =0$, by quasi-symmetry. 

Then $h_{\Gamma}$ is a log-hyperplane -- if we pick a basis of $L$ and corresponding coordinates $x_i$ on $T_{L^{\vee}}$, then $x^{l_{\Gamma}}=\prod_i x_i^{l_i}$ and $H_{\Gamma}:=\text{Log}(h)=\{ (\text{Log}(x_i))_i \in \mathbb{C}^k \mid \sum_i l_i \text{Log}(x_i) \in \text{Log}(c_{\Gamma})+\mathbb{Z} \}$ is a free $\mathbb{Z}$-orbit of complex affine hyperplanes. So circuits give rise to log-hyperplanes in the discriminant locus. 

\begin{rem}
In the self-dual case, we may pick half of the weights, which we index $\beta_i$, such that the weights $\beta_j$ are precisely those of the form $\pm \beta_i$. Then the terms in $c_{\Gamma}$ corresponding to $\pm \beta_i$ cancel up to a sign and we get that $c_{\Gamma}= \prod_i (-1)^{\langle l_{\Gamma},\beta_i \rangle}$. This means that $\Im(\text{Log}(c_{\Gamma}))=0$ and hence that $H_{\Gamma}$ is the complexification of a real hyperplane. This is not true for general quasi-symmetric representations.
\label{sd}
\end{rem}

 Our main theorem says that, in the quasi-symmetric case, all components of the discriminant locus arise from circuits. 

\begin{thm}
The log-discriminant locus associated to a quasi-symmetric representation is an (affine) hyperplane arrangement, whose hyperplanes are the log-$(A \cap \Gamma)$-discriminants arising from the faces $\Gamma \subset \Delta$ which are circuits. 

\label{Cor}
\end{thm}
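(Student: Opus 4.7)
The plan is to apply Lemma \ref{Main} to the sub-representation on each face $\Gamma$ of $\Delta$. By the product formula \eqref{eq:E_A}, the discriminant locus equals the union, over faces $\Gamma$ with $m(\Gamma)>0$, of $\{\Delta_{A\cap\Gamma}=0\}$, each viewed inside $T_{L^\vee}$ via the surjection $T_{L^\vee}\twoheadrightarrow T_{L_\Gamma^\vee}$ dual to the inclusion $L_\Gamma\hookrightarrow L$. It therefore suffices to understand the primary discriminant $\nabla_{A\cap\Gamma}$ of the sub-representation with weights $\{\beta_i^\Gamma:=p(\beta_i)\}_{i\in I(\Gamma)}$, where $p:L^\vee\twoheadrightarrow L_\Gamma^\vee$ is the restriction map and $I(\Gamma)\subset\{1,\dots,n\}$ indexes the rays in $\Gamma$.

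The main obstacle is proving that each such sub-representation is itself Calabi--Yau and quasi-symmetric, so that Lemma \ref{Main} applies to it. Calabi--Yau is automatic, since the rays in $\Gamma$ still lie on $H$. For quasi-symmetry, the crucial observation is that any $l\in L_\Gamma$, viewed in $\mathbb{Z}^n$ via $L_\Gamma\subset L\xrightarrow{Q^\vee}\mathbb{Z}^n$, satisfies $l_i=0$ for $i\notin I(\Gamma)$; combined with the identity $\langle \beta_i,l\rangle=l_i$ for $l\in L$, this forces $p(\beta_i)=0$ whenever $i\notin I(\Gamma)$. Given a line $\ell'\subset L_{\Gamma,\mathbb{R}}^\vee$, set $P:=p^{-1}(\ell')\subset L^\vee_\mathbb{R}$. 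Quasi-symmetry of the full representation, summed over the lines through the origin contained in $P$, gives $\sum_{\beta_i\in P}\beta_i=0$. Splitting this sum according to whether $i\in I(\Gamma)$ and then applying $p$ (under which the $i\notin I(\Gamma)$ contributions vanish) yields $\sum_{i\in I(\Gamma),\,p(\beta_i)\in\ell'}p(\beta_i)=0$, the quasi-symmetry condition for the sub-rep along $\ell'$.

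With this in hand, Lemma \ref{Main} identifies $\nabla_{A\cap\Gamma}$ with a single point of $T_{L_\Gamma^\vee}$. Such a point is a hypersurface --- so $\Delta_{A\cap\Gamma}$ is a non-trivial polynomial --- precisely when $\dim T_{L_\Gamma^\vee}=1$, i.e.\ when $\mathrm{rank}(L_\Gamma)=1$, i.e.\ when $\Gamma$ is a circuit. Hence non-circuit faces contribute nothing to $E_A$, and each circuit face $\Gamma$ contributes the log-hyperplane $H_\Gamma$ constructed immediately before the theorem, yielding the claimed hyperplane arrangement.
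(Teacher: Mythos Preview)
Your proof is correct and follows essentially the same strategy as the paper: reduce via the product formula \eqref{eq:E_A} to understanding $\nabla_{A\cap\Gamma}$ for each face, show that quasi-symmetry is inherited by faces, and then invoke Lemma~\ref{Main} to conclude that only circuits (where $\operatorname{rank} L_\Gamma=1$) give hypersurface contributions. The paper isolates the inheritance step as a separate Lemma~\ref{Inherit} and proves it by an iterative line-by-line argument, whereas you prove it inline by summing quasi-symmetry over all lines in $P=p^{-1}(\ell')$ at once and then applying $p$; these are equivalent organisations of the same idea.
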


The inductive structure in \eqref{eq:E_A} means that, to understand the discriminant locus, we need to consider the sub-VGIT problem associated to $\Gamma \subset \Delta$. One can check that 3 copies of the short exact sequence (\ref{ToricData}) fit together in a commutative diagram with exact rows and columns, where $n_{\Gamma}$ is the number of rays in the face $\Gamma$, $N_{\Gamma} \subset N$ is the sublattice generated by $\Gamma \cap N$ and $M_{\Gamma}=N_{\Gamma}^{\vee}$:

\begin{equation}
\begin{tikzcd}
\hspace{1cm} & 0 \arrow{d}   & 0 \arrow{d} & 0 \arrow{d} &  \\
0 \arrow{r} & M_{\Gamma}'  \arrow{d} \arrow{r} & (\mathbb{Z}^{n-n_{\Gamma}})^{\vee} \arrow{d} \arrow{r}  & (L'_{\Gamma})^{\vee} \arrow{d} \arrow{r}  & 0  \\
0 \arrow{r} & M \arrow{d} \arrow{r} & (\mathbb{Z}^n)^{\vee} \arrow{d} \arrow{r} & L^{\vee} \arrow{d}{p} \arrow{r} & 0 \\
0 \arrow{r} & M_{\Gamma} \arrow{d} \arrow{r} & (\mathbb{Z}^{n_{\Gamma}})^{\vee} \arrow{d} \arrow{r} & L_{\Gamma}^{\vee} \arrow{d} \arrow{r} & 0 \\	
 & 0 & 0 & 0 &   
\end{tikzcd}
\label{CommSquare}
\end{equation}

\begin{lem}
If the representation $T_L \lcirclearrowright \mathbb{C}^n$ is quasi-symmetric and $\Gamma \subset \Delta$ is a face, then the induced representation $T_{L_{\Gamma}} \lcirclearrowright \mathbb{C}^{n_{\Gamma}}$ is quasi-symmetric also.
\label{Inherit}
\end{lem}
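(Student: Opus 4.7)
The plan is to verify the quasi-symmetry condition for the induced representation $T_{L_\Gamma} \lcirclearrowright \mathbb{C}^{n_\Gamma}$ directly, by pulling back lines in $(L_\Gamma^\vee)_\mathbb{R}$ to linear subspaces of $L^\vee_\mathbb{R}$ and invoking the original quasi-symmetry. The first step is to identify the weights of the induced representation: the bottom row of \eqref{CommSquare} exhibits $L_\Gamma^\vee$ as the cokernel of $M_\Gamma \hookrightarrow (\mathbb{Z}^{n_\Gamma})^\vee$, so the weights are the images in $L_\Gamma^\vee$ of the basis vectors $\{e_j^\vee : \omega_j \in \Gamma\}$, and commutativity of \eqref{CommSquare} shows these equal $\{p(\beta_j) : \omega_j \in \Gamma\}$, where $p \colon L^\vee \twoheadrightarrow L_\Gamma^\vee$ is the rightmost vertical map. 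A small but crucial preliminary observation: for $j$ with $\omega_j \notin \Gamma$, the middle vertical projection $(\mathbb{Z}^n)^\vee \twoheadrightarrow (\mathbb{Z}^{n_\Gamma})^\vee$ annihilates $e_j^\vee$, so commutativity forces $p(\beta_j) = 0$.

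Given any line $\ell' \subset (L_\Gamma^\vee)_\mathbb{R}$, I would consider its preimage $V := p^{-1}(\ell') \subset L^\vee_\mathbb{R}$, which is a linear subspace and hence (away from the origin) a union of lines through the origin. Applying quasi-symmetry of the original representation to each such line and summing yields
\[ \sum_{j \,:\, \beta_j \in V} \beta_j \;=\; \sum_{\ell \subset V} \;\sum_{j \,:\, \beta_j \in \ell} \beta_j \;=\; 0. \]
Pushing this identity forward by $p$ and using the preliminary observation, the contributions of all $j$ with $\omega_j \notin \Gamma$ drop out, leaving
\[ 0 \;=\; \sum_{j \,:\, \beta_j \in V} p(\beta_j) \;=\; \sum_{j \,:\, \omega_j \in \Gamma,\; p(\beta_j) \in \ell'} p(\beta_j), \]
which is exactly the quasi-symmetry condition for the line $\ell'$ in the induced representation.

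The only real subtlety to watch is the disappearance of the $j$ with $\omega_j \notin \Gamma$ from the final sum: \emph{a priori} the preimage $V$ will typically contain many such weights, and if they were to map nontrivially to $\ell'$ they would spoil the argument. The fact that $p$ annihilates all of them, forced by commutativity of \eqref{CommSquare}, is what makes the reduction clean; once this diagram chase is in hand the rest of the proof is pure linearity.
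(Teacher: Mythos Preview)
Your proof is correct and follows essentially the same approach as the paper: both hinge on the diagram chase showing that $\beta_j \in \ker(p)$ whenever $\omega_j \notin \Gamma$, and then use quasi-symmetry line-by-line in $L^\vee_{\mathbb{R}}$. The only cosmetic difference is that you sum over the whole preimage $V = p^{-1}(\ell')$ at once and let the $\omega_j \notin \Gamma$ terms vanish after applying $p$, whereas the paper iterates over individual lines $\ell \subset L^\vee_{\mathbb{R}}$ and observes that each such $\ell$ (not contained in $\ker p$) already contains only weights from $\Gamma$.
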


\begin{proof}
Fix a line $\hat{\ell} \subset (L_{\Gamma})_{\mathbb{R}}^{\vee}$ and consider $\sum_{i| \hat{\beta}_i \in \hat{\ell}\backslash \{0\}} \hat{\beta}_i$ where $\hat{\beta}_i:=p(\beta_i) \in L_{\Gamma}^{\vee}$ for $i$ such that $\omega_i \in \Gamma$. Take a term $\hat{\beta}_i$ in this sum and consider the natural lift $\beta_i \in L^{\vee}$. It defines a line $\ell \subset L^{\vee}_{\mathbb{R}}$ and so, by quasi-symmetry, $\sum_{i| \beta_i \in \ell} \beta_i=0$. Since $l \nsubset (L'_{\Gamma})_{\mathbb{R}}^{\vee}$, the commutative diagram \eqref{CommSquare} implies that all of the $\beta_i$ in this sum correspond to rays in $\Gamma$ and hence project under $p$ to give some of the remaining $\hat{\beta}_i$ in our sum. So we've proved the sub-sum $\sum_{i | \beta_i \in \ell} \hat{\beta}_i=0$. Iterating this procedure yields the desired conclusion.
\end{proof}

\begin{proof}[Proof of Theorem \ref{Cor}]
For the first claim, we only need to show that if $\Gamma$ is not a circuit, then $\Delta_{A \cap \Gamma}$ doesn't contribute to $E_A$. In this case, the space $L_{\Gamma}$ of relations in $\Gamma$ has $\text{dim}(L_{\Gamma})>1$, hence, by Lemma \ref{Main} and Lemma \ref{Inherit}, $\nabla_{A \cap \Gamma}$ has codimension at least 2 and so $\Delta_{A \cap \Gamma}$ doesn't contribute to $E_A$.     
\end{proof}

\begin{rem}
We note here that the Horn uniformization \eqref{Horn} also gives us a criterion for when the component $\nabla_{A \cap \Gamma}$ of the discriminant locus is a log-hyperplane of the form $\{\prod_{i=1}^k x_i^{l_i}=c\}$. Namely this happens precisely when 
\[ \prod_{i=1}^k  (\prod_{j=1}^n (\lambda_1 \beta_{j1}+ \hdots + \lambda_k \beta_{jk})^{\beta_{ji}})^{l_i}= \prod_{j} (\lambda_1 \beta_{j1}+ \hdots + \lambda_k \beta_{jk})^{\langle l, \beta_{j} \rangle}\]
is constant. Decomposing this product into lines $\ell \subset L^{\vee}_{\mathbb{R}}$ as in the proof of Lemma \ref{Main}, we see that this is equivalent to $\langle l, \sum_{j \mid \beta_j \in \ell} \beta_j \rangle=0$ for all lines $\ell$ -- that is, $\sum_{j \mid \beta_j \in \ell} \beta_j=0$ for all lines $\ell \not \subset H_l$. 
\end{rem}

\section{Halpern-Leistner--Sam's Hyperplane Arrangement}
\label{sec:HLS}
 
We conclude by showing that our discriminant hyperplane arrangement in $L^{\vee}_{\mathbb{C}}$ agrees with the one constructed by Halpern-Leistner--Sam \cite{HLS}, Ch. 3.  From Theorem \ref{Cor}, we know that our hyperplane arrangement, denoted $\mathcal{H}_{disc}$, comes from faces $\Gamma \subset \Delta$ which are circuits. Explicitly, a circuit has a unique (up to sign) choice of generating relation  $l_{\Gamma} \in L_{\Gamma}$ and our hyperplanes are $H_{\Gamma,n}:=\{y \in L^{\vee}_{\mathbb{C}} \mid \langle l_{\Gamma},y \rangle = \text{Log}_{br}(c_{\Gamma})+n \}$ where $n \in \mathbb{Z}$ and $c_{\Gamma}$ is defined by \eqref{c}. Note that $H_{l_{\Gamma}}=(L'_{\Gamma})^{\vee}_{\mathbb{R}}$ and so $H_{\Gamma,n}$ is a translate of $(L'_{\Gamma})^{\vee}_{\mathbb{C}}$.

In order to define their hyperplane arrangement, Halpern-Leistner--Sam introduce two polytopes in $L^{\vee}_{\mathbb{R}}$ associated to a $T_L$-representation:
\begin{align*}
\bar{\Sigma}:=\big\{ \sum_j a_j \beta_j \mid a_j \in [-1,0] \big\} =: \sum_j [-\beta_j,0] \\
\bar{\nabla}:=\big \{ \beta \in L^{\vee}_{\mathbb{R}} \mid -\frac{\eta_l}{2} \leq \langle l, \beta \rangle \leq \frac{\eta_l}{2} \text{ for all } l \in L \big \} 
\end{align*}
where $\eta_l:=\text{max}\{\langle l,\mu \rangle \mid \mu \in \bar{\Sigma} \}$. Moreover they show, in \cite{HLS}, Lemma 2.8, that for quasi-symmetric torus representations $\bar{\nabla}=\frac{1}{2}\bar{\Sigma}$. They then define a real hyperplane arrangement in $L^{\vee}_{\mathbb{R}}$ as the $L^{\vee}$ orbit, acting by translations, of the supporting affine hyperplanes $H_{F}$ of the facets $F$ of $\bar{\nabla}$. Their hyperplane arrangement, which we denote $\mathcal{H}_{HLS}$, is then the complexification of this real hyperplane arrangement.  Note that, as the weights span $L^{\vee}$, $\bar{\Sigma}$ and $\bar{\nabla}$ are full-dimensional and so there is a unique $H_F$ for each facet $F$. Therefore if we write $H_F=\{y \in L^{\vee}_{\mathbb{R}} \mid \langle l_F,y\rangle=c_F\}$, then the hyperplanes in $\mathcal{H}_{HLS}$ have the form $H_{F,n}:=\{y \in L^{\vee}_{\mathbb{C}} \mid \langle l_F,y\rangle=c_F+n\}$.

These two hyperplane arrangements, $\mathcal{H}_{disc}$ and $\mathcal{H}_{HLS}$, cannot be precisely the same since $\text{Log}_{br}(c_{\Gamma})$ can have non-zero imaginary part (though not in the self-dual case -- see Remark \ref{sd}) and hence $H_{\Gamma,n}$ cannot in general be the complexification of a real hyperplane in $L^{\vee}_{\mathbb{R}}$. However we do have:

\begin{prop}
$\mathcal{H}_{disc}=\mathcal{H}_{HLS}$ after translating the latter by $-\frac{i}{2\pi} \sum_{j} \text{log}(|n_j|)\beta_j \in iL_{\mathbb{R}}^{\vee}$
\end{prop}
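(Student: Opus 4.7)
The plan is to match the two arrangements hyperplane by hyperplane through a common parametrization by primitive elements $l \in L$ modulo sign. First I will establish a bijection between circuit faces $\Gamma \subset \Delta$ and pairs of antipodal facets $F$ of $\bar{\nabla}$, realized by $l_\Gamma = \pm l_F$. Then I will verify that for matched $(\Gamma,F)$ the defining equations of $H_{\Gamma,n}$ and $H_{F,n}$ coincide after the stated translation by $v := -\tfrac{i}{2\pi}\sum_j \log|n_j|\beta_j$.

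For the bijection: given a circuit $\Gamma$, the primitive generator $l_\Gamma$ satisfies $\langle l_\Gamma,\beta_j\rangle = 0$ precisely for $\omega_j \notin \Gamma$, so the weights $\{\beta_j : \omega_j \notin \Gamma\}$ all lie in $H_{l_\Gamma}$. By the top row of diagram \eqref{CommSquare} these weights generate $(L'_\Gamma)^\vee = H_{l_\Gamma} \cap L^\vee$ as a lattice, so $l_\Gamma$ is a facet direction of the zonotope $\bar{\Sigma} = 2\bar{\nabla}$. Conversely, given a primitive $l \in L$ such that the weights in $H_l$ span $H_l$, set $I := \{j : \langle l,\beta_j\rangle \neq 0\}$. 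Quasi-symmetry yields $\sum_{j \in I}\beta_j = \sum_{\ell \not\subset H_l}\sum_{\beta_j \in \ell}\beta_j = 0$, hence also $\sum_{j \notin I}\beta_j = 0$ by Calabi--Yau, so the indicator function $\chi_{I^c} \in (\mathbb{Z}^n)^\vee$ lies in $M = \ker Q$. Viewed via $M = N^\vee$, $\chi_{I^c}$ is a non-negative functional cutting out the face of $\sigma$ (hence of $\Delta$) whose rays are exactly $\{\omega_j\}_{j \in I}$; the spanning hypothesis then forces the relation lattice on these rays to be $\mathbb{Z}\cdot l$, identifying this face as a circuit $\Gamma$ with $l_\Gamma = l$.

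For the matching: writing $H_{F,n} = \{\langle l_F,y\rangle = \eta_{l_F}/2 + n\}$ and $H_{\Gamma,n} = \{\langle l_\Gamma,y\rangle = \text{Log}_{br}(c_\Gamma) + n\}$, I split $\text{Log}_{br}(c_\Gamma) = \text{arg}_{br}(c_\Gamma)/(2\pi) - i\log|c_\Gamma|/(2\pi)$. Using $\log|c_\Gamma| = \sum_j \langle l_\Gamma,\beta_j\rangle \log|n_j|$, the imaginary part equals $\langle l_\Gamma,v\rangle$, which is exactly the shift absorbed when $\mathcal{H}_{HLS}$ is translated by $v$. The real parts then agree modulo $\mathbb{Z}$ if and only if $\eta_{l_F}/2 \equiv \text{arg}_{br}(c_\Gamma)/(2\pi) \pmod 1$, equivalently the parity identity $(-1)^{\eta_{l_F}} = \text{sign}(c_\Gamma)$. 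I expect this parity identity to be the main obstacle. My plan is to verify it by a line-by-line mod $2$ computation: setting $r_\ell := \langle l,\underline{\ell}\rangle$ and $q_\ell := \sum_{\beta_j \in \ell,\,n_j > 0} n_j$, both $\eta_{l_F} = \sum_\ell |r_\ell|\,q_\ell$ and the sign-exponent $\sum_{j : n_j < 0}\langle l_\Gamma,\beta_j\rangle$ of $c_\Gamma$ should reduce modulo $2$ to $\sum_\ell r_\ell q_\ell$, using the quasi-symmetric identity $\sum_{\beta_j \in \ell} n_j = 0$ on each line.
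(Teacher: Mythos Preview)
Your proposal is correct and follows essentially the same strategy as the paper: set up a bijection between circuit faces $\Gamma$ and facet directions of $\bar{\nabla}$ via primitive $l\in L$ whose orthogonal hyperplane is spanned by weights, then match the imaginary parts of the constants by the translation and the real parts by a parity identity. The only notable differences are technical: for the bijection you build the supporting functional $\chi_{I^c}\in M$ directly (the paper instead invokes \cite{CLS}, Lemma~14.3.3 to identify the face), and for the parity you work line-by-line with the formula $c_\Gamma=\prod_j n_j^{\langle l_\Gamma,\beta_j\rangle}$ from \eqref{c}, whereas the paper uses the equivalent expression $c_\Gamma=\prod_j \langle l_\Gamma,\beta_j\rangle^{\langle l_\Gamma,\beta_j\rangle}$ to read off the sign in one step as $(-1)^{\sum_{\langle l_\Gamma,\beta_j\rangle<0}\langle l_\Gamma,\beta_j\rangle}$ and then applies quasi-symmetry to equate this exponent with $-\eta_{l_F}$.
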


\begin{proof}
We start by determining the facets of $\bar{\nabla}$ using its description both in terms of inequalities and as a convex hull. By adding up all the weights on a ray inside a given line, we can assume  that, for the purpose of determining the facets, on any line with weights our representation only has weights $\pm \beta_{j}$.

We observe that $\beta \in \partial \bar{\nabla}$ precisely when $\beta = \sum_j a_j {\beta}_j \in \bar{\nabla}=\frac{1}{2} \bar{\Sigma}$ (i.e. $a_j \in [-1/2,1/2]$) and $\exists l \in L$ such that one of inequalities in the definition of $\bar{\nabla}$ is saturated i.e. $\langle l, \beta \rangle = \pm \frac{\eta_l}{2}$. Since $\bar{\nabla}=\frac{1}{2} \bar{\Sigma}$, $\frac{\eta_l}{2}=\text{max}\{\langle l,\mu \rangle \mid \mu \in \bar{\nabla} \}$ and, as $\bar{\nabla}=-\bar{\nabla}$, $-\frac{\eta_l}{2}=\text{min}\{\langle l,\mu \rangle \mid \mu \in \bar{\nabla} \}$. Write $\langle l,\beta \rangle=\sum_j a_j \langle  l, \beta_j \rangle$ and, swapping $\beta_j$ for $-\beta_j$ if necessary, suppose that $\langle l, \beta_j \rangle \geq 0$ for all $j$. Then we have the equality $\langle l, \beta \rangle = \pm \frac{\eta_l}{2}$ precisely when $\beta$ is of the form $ \sum_{j \mid \langle l, \beta_j \rangle > 0} \pm \beta_j/2+ \sum_{j \mid \langle l, \beta_j \rangle = 0} a_j \beta_j$. So we get a pair of facets $F_{\pm}$ of $\bar{\nabla}$, going through $ \sum_{j \mid \langle l, \beta_j \rangle > 0} \pm \beta_j/2$ respectively, precisely when the set of weights in the linear hyperplane $H_l$ span the whole hyperplane. Hence the supporting hyperplanes of the facets $F_{\pm}$ are $H_{F_{\pm}}=\{\langle l,-\rangle = \langle l,\sum_{j \mid \langle l, \beta_j \rangle > 0} \pm \beta_j/2 \rangle \} $ whenever $l$ satisfies this property. So for such an $l$, $l_{F_{\pm}}:=l$ and $c_{F_{\pm}}:=\langle l,\sum_{j \mid \langle l, \beta_j \rangle > 0} \pm \beta_j/2 \rangle$. Note that, as the facets through $\sum_{j \mid \langle l, \beta_j \rangle > 0} \pm \beta_j/2$ differ by an element of $L^{\vee}$, $H_{F_+ ,m}=H_{F_-,m+n}$ for some $n \in \mathbb{Z}$, so we need only consider one of these sets of hyperplanes. By convention, we choose to work with $F_+$.

We now show that $H_{l_{F}}=H_{l_{\Gamma}}$ for some circuit in a face $\Gamma$.  We know that $H_{l_F}$ is a linear hyperplane in $L_{\mathbb{R}}^{\vee}$ which is spanned by the weights on it. First note that all the linear hyperplanes $H_{l_{\Gamma}}$ have this property, because of the commutative diagram (\ref{CommSquare}) and the fact that $H_{l_{\Gamma}}=(L'_{\Gamma})^{\vee}_{\mathbb{R}}$. Conversely, any linear hyperplane $H_{l_F}$ spanned by the weights lying on it is necessarily of this form. To see this, notice that we get a commutative diagram of lattices of the form (\ref{CommSquare}) by defining $(L_{\Gamma}')^{\vee}:=H_{l_F} \cap L^{\vee}$ and dualising this gives a linear slice $N_{\Gamma}$ of $N$. By \cite{CLS}, Lemma 14.3.3, this linear slice is a face of $\sigma$ precisely when there is a positive relation amongst the weights in $H_{l_F}$. Since the representation is quasi-symmetric, we have that $\sum_{j \mid \beta_j \in H_{l_F}} \beta_j=0$ and hence we have such a linear relation. Thus $N_{\Gamma}$ comes from a face and, as $(L'_{\Gamma})^{\vee}$ is codimension 1, $\Gamma$ is a circuit.  Since $l_{\Gamma}$ is the defining equation of $(L'_{\Gamma})^{\vee}$, $H_{l_F}=H_{l_{\Gamma}}$ as claimed.

For this $\Gamma$, we now claim that the real hyperplanes $\Re(H_{F,0})=\{y \in L^{\vee}_{\mathbb{R}} \mid \langle l_F,y\rangle=c_F\}$ and $\Re(H_{\Gamma,n})=\{y \in L_{\mathbb{R}}^{\vee} \mid \langle l_{\Gamma},y\rangle = \Re(\text{Log}_{br}(c_{\Gamma})) +  n \}$ are the same for some $n \in \mathbb{Z}$. Hence $\Re(H_{F,m})=\Re(H_{\Gamma,m+n})$ and so $\mathcal{H}_{HLS}$ and $\mathcal{H}_{disc}$ determine the same real hyperplane arrangements. To see the claim, recall that $c_F=\langle l,\sum_{j \mid \langle l, \beta_j \rangle > 0} \beta_j/2 \rangle$ and hence is a half-integer. On the other hand, $c_{\Gamma} \in \mathbb{Q}^{\times}$ so $\Re(\text{Log}_{br}(c_{\Gamma}))$ is either $0$ or $1/2$ when $c_{\Gamma}$ is respectively positive or negative. Since $H_{l_F}=H_{l_{\Gamma}}$, it suffices to prove that $c_{\Gamma}>0$ precisely when $c_F \in \mathbb{Z}$. Recall that $c_{\Gamma}$ is a product of terms of the form $n_j^{n_j}$. These are positive precisely when $n_j \geq 0$ or $n_j$ is even. So $c_{\Gamma}$ is positive precisely when it contains an even number of negative odd terms -- that is, $\sum_{n_j <0} n_j \in 2\mathbb{Z}$. Since $n_j = \langle l_{\Gamma},\beta_j \rangle$, $c_{\Gamma}>0$ precisely when $\langle l_{\Gamma},\sum_{j \mid \langle l_{\Gamma}, \beta_j \rangle <0} \beta_j/2 \rangle \in \mathbb{Z}$. By quasi-symmetry $\sum_{j \mid \langle l_{\Gamma}, \beta_j \rangle <0} \beta_j/2=-\sum_{j \mid \langle l_{\Gamma}, \beta_j \rangle >0} \beta_j/2$ and so this happens precisely when $c_F=\langle l_{\Gamma},\sum_{j \mid \langle l_{\Gamma}, \beta_j \rangle >0} \beta_j/2 \rangle \in \mathbb{Z}$.

The imaginary parts of our two hyperplane arrangements, $\Im(H_{F,m})=\{y \in L^{\vee}_{\mathbb{R}} \mid \langle l_F,y \rangle =0\}$ and $\Im(H_{\Gamma,m+n})=\{y \in L^{\vee}_{\mathbb{R}} \mid \langle l_{\Gamma},y \rangle = -\frac{1}{2\pi}\text{log}(|c_{\Gamma}|) \}$, are different in general. However, by \eqref{c}
\[ \text{log}(|c_{\Gamma}|)= \langle l_{\Gamma} , \sum_{j} \text{log}(|n_j|)\beta_j \rangle \]
So if we set  $z:=-\frac{1}{2\pi} \sum_{j} \text{log}(|n_j|)\beta_j$, $\Im(H_{F,m})+z=\Im(H_{\Gamma,m+n})$ and so $H_{F,m}+iz=H_{\Gamma,m+n}$ and we have proved the corollary.

\end{proof}

\end{document}